\newcommand{\kk}{\mathrm k}
\newcommand{\NN}{\mathbb{N}}
\newcommand{\Ncc}{\mathcal{N}}
\DeclareMathOperator{\pnt}{\raise 0.5mm \hbox{\large\bf.}}
\DeclareMathOperator{\Tor}{Tor}
\DeclareMathOperator{\bigheight}{bigheight}
\DeclareMathOperator{\lcm}{lcm}
\DeclareMathOperator{\reg}{reg}
\DeclareMathOperator{\cd}{cd}
\DeclareMathOperator{\pd}{pd}
\DeclareMathOperator{\supp}{supp}
\let\phi=\varphi
\newtheorem{thm}{\bf Theorem}[section]
\newtheorem{lem}[thm]{\bf Lemma}
\newtheorem{cor}[thm]{\bf Corollary}
\newtheorem{quest}[thm]{\bf Question}
\theoremstyle{definition}
\newtheorem{defn}[thm]{\bf Definition}
\theoremstyle{plain}
\newtheorem*{thm*}{Theorem}
\newtheorem*{lem*}{Lemma}
\newtheorem*{cor*}{Corollary}
\newtheorem*{claim*}{Claim}
\newtheorem*{defn*}{Definition}
\theoremstyle{remark}
\newtheorem{rem}[thm]{Remark}
\numberwithin{equation}{section}
\title{Regularity of linearly presented squarefree monomial ideals}
\author{Hailong Dao}
\address{Department of Mathematics, University of Kansas, Lawrence, KS 66045, USA}
\email{hdao@ku.edu}
\author{Thanh Vu}
\address{Institute of Mathematics, VAST, 18 Hoang Quoc Viet, Hanoi, Vietnam}
\email{vuqthanh@gmail.com}
\thanks{}
\date{\today}
\subjclass[2020]{13D02, 13D05, 13H99}
\keywords{Regularity, Green-Lazarsfeld index, squarefree monomial ideal,  cohomological dimension, Serre's condition}
\begin{document}

\begin{abstract} We prove a sharp bound for the regularity of a squarefree monomial ideal with a linear presentation. This result  also answers in positive a question on the cohomological dimension of squarefree monomial ideals satisfying Serre's $S_2$-condition proposed by Dao and Takagi.
\end{abstract}

\maketitle

\section{Introduction} 

In this work, we establish bounds on the Castelnuovo-Mumford regularity of squarefree monomial ideals with a linear presentation. 

\begin{thm}\label{thm1}
Let $I \subseteq S = \kk[x_1,\ldots,x_n]$ be a non-zero squarefree monomial ideal generated in degree $d$. Suppose that $I$ has linear first syzygies. Then 
$$\reg (I) \leq \max \left \{d, \left \lfloor \frac{(d-1)n}{d+1} \right \rfloor + 1 \right \}.$$
\end{thm}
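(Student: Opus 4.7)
The plan is to apply Alexander duality and convert the bound on $\reg(I)$ into a lower bound on depth. Write $I = I_\Delta$; since $I$ is generated in degree $d$, the minimal non-faces of $\Delta$ all have size $d$, so $\Delta^\vee$ is pure of dimension $n-d-1$ and $\dim \kk[\Delta^\vee] = n - d$. By Terai's formula together with the Eagon--Reiner/Yanagawa correspondence, the hypothesis that $I$ has linear first syzygies is equivalent to $\kk[\Delta^\vee]$ satisfying Serre's condition $S_2$, and
\[
\reg(I) \;=\; \pd_S(\kk[\Delta^\vee]) \;=\; n - \depth_S(\kk[\Delta^\vee]).
\]
Thus the theorem reduces to: an $S_2$ squarefree quotient of Krull dimension $n-d$ has depth at least $n - \max\{d, \lfloor (d-1)n/(d+1) \rfloor + 1\}$.

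The first term $d$ in the maximum records the Cohen--Macaulay case, where $\depth = \dim = n-d$ and the inequality is automatic. For the second term, I would proceed by induction on $n$ using the short exact sequence
\[
0 \longrightarrow S/(I^\vee : x_v) \xrightarrow{\;\cdot x_v\;} S/I^\vee \longrightarrow S/(I^\vee, x_v) \longrightarrow 0,
\]
where $v$ is a carefully chosen vertex of $\Delta^\vee$. Combinatorially, $(I^\vee : x_v)$ encodes the cone over $\lk_{\Delta^\vee}(v)$, while $(I^\vee, x_v)$ encodes the deletion $\Delta^\vee \setminus v$. The $S_2$ hypothesis passes to the link (which has dimension $n-d-2$), giving the hook for induction. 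The depth lemma applied to this sequence then yields a recursive estimate, and the inductive hypothesis on both pieces is what produces the fractional coefficient.

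The main obstacle will be pinning down the sharp constant $(d-1)/(d+1)$. A naive choice of $v$ yields a weaker bound; sharpness requires balancing the number of facets through $v$ (which controls the link) against those avoiding $v$ (which controls the deletion), via a counting argument on the facets of size $n-d$. The $S_2$ condition will be invoked to rule out the extremal configurations that would otherwise allow the recursion to stall: it forces enough connectivity in subcomplexes of $\Delta^\vee$ to ensure a vertex meeting the required threshold always exists. I expect the most delicate bookkeeping to occur at the boundary where one of the two pieces is itself Cohen--Macaulay, so that only the other must be estimated inductively; showing tightness and correctly matching the floor function on both sides of the recursion is where the argument is most fragile.
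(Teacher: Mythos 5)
Your reduction via Alexander duality is correct and is indeed the standard dictionary (Terai, Yanagawa): the theorem is equivalent to the depth lower bound you state for $S_2$ Stanley--Reisner rings of dimension $n-d$. But the induction you propose has a structural gap that the rest of the sketch does not repair. In the exact sequence
$$0 \to S/(I^\vee : x_v) \to S/I^\vee \to S/(I^\vee, x_v) \to 0,$$
the link piece does inherit $S_2$, but the deletion $\Delta^\vee \setminus v$ in general inherits neither purity nor $S_2$, for any choice of $v$; $S_2$ alone does not guarantee the existence of a ``shedding vertex'' (already in the Cohen--Macaulay case not every complex is vertex decomposable). So the inductive hypothesis cannot be applied to the second term and the recursion does not close. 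Your closing paragraph gestures at using $S_2$-connectivity to select $v$, but that is precisely the missing theorem rather than a bookkeeping issue: you would need to prove that some vertex exists whose deletion (or some substitute residual object) still satisfies the hypotheses with controlled parameters, and nothing in the sketch supplies this.

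The paper works entirely on the primal side and sidesteps this problem by strengthening the induction hypothesis: it bounds $\reg(I+(f))$ for a squarefree monomial $f$ of degree at most $d$ such that $(I+(f))_{[d]}$ is linearly presented, and inducts on $n$ and on $\deg f$ via the decomposition $(I+(f)) = (I+(g)) \cap (I+(x))$ with $f = gx$. The technical heart is Lemma \ref{lem_gcd}, a connectivity statement about the lcm-graph guaranteeing that $g$ can be chosen with $\deg g = \deg f - 1$ so that $(I+(g))_{[d]}$ is still linearly presented; under duality this amounts to controlling $S/(I^\vee \cap P)$ for monomial primes $P$, which is strictly more information than link-plus-deletion. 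Your proposal contains no counterpart to this step, and it is exactly where the constant $(d-1)/(d+1)$ is produced, together with the case split on whether $\deg f$ exceeds $\left\lfloor (d+1)/2 \right\rfloor$ in Steps 2--3 of the paper's argument. As written, the proposal is a plan whose central difficulty is deferred rather than resolved.
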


Our bound is sharp for all odd $d>2$, and it was obtained via an elementary, but quite intricate induction process. It is somewhat surprising that such a simple statement is not known until now, despite the intense attention on the topic especially in recent years, which we will describe shortly below. 

To help explain fully the context of our work, it is worth restating our main result via standard equivalences using the Alexander dual as follows:

\begin{cor}\label{cor1}
$I \subseteq S = \kk[x_1,\ldots,x_n]$ be a non-zero squarefree monomial ideal of height $c$. If $S/I$ satisfies Serre condition $(S_2)$, then 
$$ \pd(S/I) = \cd(S,I)\leq \max \left \{c, \left \lfloor \frac{(c-1)n}{c+1} \right \rfloor + 1 \right \}.$$
where $\pd$ denotes projective dimension and $\cd$ denotes the cohomological dimension. 
\end{cor}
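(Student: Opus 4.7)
The plan is to translate Corollary~\ref{cor1} into Theorem~\ref{thm1} via Alexander duality. Let $I^\vee$ denote the Alexander dual of $I$. Since $I$ is a squarefree monomial ideal of height $c$, a standard fact is that $I^\vee$ is a squarefree monomial ideal generated in degree $c$ in the same ring $S = \kk[x_1,\ldots,x_n]$.

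The first step is to convert the Serre $(S_2)$ hypothesis on $S/I$ into a linear-presentation hypothesis on $I^\vee$. This is a well-known Alexander duality theorem (due to Yanagawa, extending the Eagon--Reiner theorem which handles the Cohen--Macaulay/linear resolution correspondence): $S/I$ satisfies $(S_2)$ if and only if $I^\vee$ has linear first syzygies. Thus $I^\vee$ satisfies the hypotheses of Theorem~\ref{thm1} with $d$ replaced by $c$.

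The second step is to apply Theorem~\ref{thm1} to $I^\vee$ and combine it with Terai's duality formula $\reg(I^\vee) = \pd(S/I)$, valid for any squarefree monomial ideal. This immediately yields
$$\pd(S/I) \;=\; \reg(I^\vee) \;\leq\; \max\left\{c,\; \left\lfloor \frac{(c-1)n}{c+1} \right\rfloor + 1\right\}.$$
The final step is to invoke Lyubeznik's theorem that for any squarefree monomial ideal $I \subseteq S$, the cohomological dimension $\cd(S,I)$ equals $\pd(S/I)$; this supplies the equality asserted in the corollary.

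There is essentially no obstacle here: all the nontrivial content is already in Theorem~\ref{thm1}, and the corollary is a mechanical translation via the three standard results above (Yanagawa's $(S_2)$--linear presentation correspondence, Terai's formula, and Lyubeznik's identification of $\cd$ with $\pd$). I would simply cite each of these and assemble the inequalities as above.
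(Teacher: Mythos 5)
Your proof is correct and follows exactly the paper's route: Alexander duality together with Yanagawa's $(S_2)$--linear-presentation equivalence, Terai's formula $\pd(S/I)=\reg(I^\vee)$, and the identification $\cd(S,I)=\pd(S/I)$ (which the paper attributes to Terai and Singh--Walther rather than Lyubeznik, but it is the same standard fact). One small precision: height $c$ alone only guarantees that $I^\vee$ has \emph{some} generator of degree $c$ (equigeneration requires $I$ to be unmixed), but since unmixedness is part of what Yanagawa's $(S_2)$ equivalence delivers, your assembly of the three cited results is still complete.
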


Our result was first suggested in the study of the cohomological dimension. Let $I$ be a non-zero ideal in a regular local ring $S$. Recall that $\cd(S,I)$, the cohomological dimension of $I$ is defined as the supremum of $i$ such that $H_I^i(M)\neq 0$ for some $S$-module $M$. Finding bounds on this invariant was proposed by Grothendieck, and such studies have been undertaken by many researchers, see for instance \cite{PS, Ha, HL, Va}. 

When $S$ contains a field $\kk$, Faltings \cite{F} proved that
$$\cd(S,I) \le n - \left \lfloor \frac{n-1}{\bigheight (I) + 1} \right \rfloor,$$
where  $\bigheight(I)$ is the big height of $I$. With no further restriction on $I$, this is the best bound possible. 

Dao and Takagi \cite{DT} studied the cohomological dimension of ideals satisfying Serre's condition $(S_i)$ and proposed the following strengthening of the work of Huneke and Lyubeznik \cite{HL}, who proved such a bound for $I$ such that $S/I$ is normal: 
\begin{quest}\cite [Question 3.10]{DT}\label{questDT} Let $S$ be an excellent regular local ring containing a field and $I\subset S$ be a proper  ideal of height $c$. Assume that $S/I$ satisfies Serre's $(S_2)$ condition. Is it always true that 
$$\cd(S,I) \le n - \left \lfloor \frac{n}{c + 1} \right \rfloor - \left \lfloor \frac{n-1}{c + 1} \right \rfloor?$$
\end{quest}

When $I$ is a squarefree monomial ideal of height $2$, Dao, Huneke, and Schweig \cite{DHS} proved that $\cd(S, I) = O(\log(n))$. Besides that, little is known  even when $I$ is a squarefree monomial ideal. When $I$ is a squarefree monomial ideal, by the results of Terai \cite{T}, Singh and Walther \cite{SW} we have that 
$$\cd (S,I) = \pd(S/I) = \reg (I^\vee),$$
where $\pd(S/I)$ is the projective dimension of $S/I$ and $\reg (I^\vee)$ is the Castelnuovo-Mumford regularity of the Alexander dual of $I$. Furthermore, by \cite[Corollary 3.7]{Y}, the condition that $I$ has height $c$ and satisfies Serre $S_2$-condition is equivalent to the condition that $I^\vee$ is generated in degree $c$ and has linear first syzygies. It is not hard to see that our Corollary \ref{cor1} answers in affirmative  Question \ref{questDT} for squarefree monomial ideals.

To give a broader context for our work from another perspective, we review some literature on ideals with $k-1$ linear steps of resolutions, called $N_{d,k}$ ideals. In algebraic geometry, such conditions when $d=2$ were studied by Green and Lazarsfeld \cite{GL}, Eisenbud, Green, Hulek, and Popescu \cite{EGHP} and others under the name $N_{2,k}$. In combinatorics, the Alexander dual of a square-free $N_{d,k}$ ideal gives a Stanley-Reisner ring satisfying Serre's condition $(S_k)$, whose algebraic and combinatorial properties display remarkable similarities to the Cohen-Macaulay situation, as shown by the work of Murai and Terai \cite{MT} and others. The diameter of the dual graph of such rings has been studied in proposed modifications of the Hirsh conjecture \cite{AB}. Constantinescu, Kahle, and Varbaro \cite{CKV} discovered a stunning connection between such ideals and geometric group theory. They proved that for a Coxeter group $W$, the {\it virtual cohomological dimension} of $W$ is equal to the maximal of the regularity of the Stanley-Reisner ring of its nerve complex $\Ncc(W)$. The defining ideal of this Stanley-Reisner ring is linearly presented and quadratic if and only if $W$ is a right-angled hyperbolic Coxeter group. This connection allows them to build a series of examples of such ideals whose regularity equals  $O(\log(\log(n)))$, answering a question raised in \cite{DHS}.

Recently, Dao and Eisenbud \cite{DE} gave a sharp upper bound for the regularity of $N_{d,k}$ primary monomial ideals. Studying ideals whose regularity achieves the upper bound is an interesting topic. We refer to \cite{DE} for further information.

In the next section, we give the necessary background and prove our main theorem.

\section{Linearly presented squarefree monomial ideals}

Let $S = \kk[x_1,\ldots, x_n]$ be a standard graded polynomial ring over a field $\kk$. We first recall some definitions and results about ideals with partial linear resolutions.

\begin{defn} We say that a homogeneous ideal $I\subseteq S$ satisfies property $N_k$ whenever the minimal free resolution of $I$ over the polynomial ring $S$ is linear for $k-1$ steps. When $I$ satisfies property $N_2$, we also call $I$ linearly presented.
\end{defn}

For a non-zero monomial $f\in S$, the support of $f$, denoted by $\supp (f)$ is the set of all variables $x_i$ such that $x_i$ divides $f$. For the rest of the paper, we assume that $I$ is a monomial ideal of $S$ with the unique monomial minimal generating set $G(I) = \{f_1,\ldots,f_r\}$. The support of $I$ is $\supp (I) = \bigcup_{i=1}^r \supp (f_i)$. For a subset $U \subset [n] = \{1,\ldots,n\}$, we denote by $I(U)$ the restriction of $I$ to $U$, i.e., $I(U) = (f_i \mid \supp (f_i) \subseteq U)$. First, we have

\begin{lem}\label{lem_restriction} Let $I$ be a monomial ideal of $S$. Assume that $I$ satisfies $N_k$. Then for any subset $U\subseteq [n]$, $I(U)$ satisfies $N_k$.
\end{lem}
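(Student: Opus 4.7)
The plan is to build the minimal $\mathbb{Z}^n$-graded free resolution of $I(U)$ over $R = \kk[x_i : i \in U]$ as a natural \emph{restriction} of the minimal $\mathbb{Z}^n$-graded free resolution of $I$ over $S$, by keeping only those summands whose generator has multidegree supported in $U$. Once that is set up, the preservation of $N_k$ will be automatic from the comparison of multigraded Betti numbers.

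Let $V = [n] \setminus U$, and view $R$ as a subring of $S$ so that $S = R[x_j : j \in V]$. Let $F_\bullet \to I \to 0$ be the minimal multigraded free resolution, with $F_i = \bigoplus_{\mathbf{a}} S(-\mathbf{a})^{\beta^S_{i,\mathbf{a}}(I)}$. First I would define $F^U_\bullet \subseteq F_\bullet$ as the sub-free module spanned by those summands $S(-\mathbf{a})$ with $\supp(\mathbf{a}) \subseteq U$, and check that it is a subcomplex: a generator in multidegree $\mathbf{a}$ with $\supp(\mathbf{a}) \subseteq U$ maps under $\partial$ to a sum of components in summands $S(-\mathbf{a}')$ with $\mathbf{a}-\mathbf{a}' \ge 0$ componentwise, which forces $\supp(\mathbf{a}') \subseteq \supp(\mathbf{a}) \subseteq U$; moreover, the scalar coefficient is a monomial of multidegree $\mathbf{a}-\mathbf{a}'$, hence lies in $R$.

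Because all coefficients lie in $R$, there is a complex $G_\bullet$ of free $R$-modules with $G_i = \bigoplus_{\supp(\mathbf{a}) \subseteq U} R(-\mathbf{a})^{\beta^S_{i,\mathbf{a}}(I)}$ and $F^U_\bullet = G_\bullet \otimes_\kk \kk[x_j : j \in V]$. Flatness of $\kk[x_j : j \in V]$ over $\kk$, combined with the exactness of $F_\bullet$ in every multidegree supported in $U$ (where $F_\bullet$ and $F^U_\bullet$ have the same graded pieces), would give exactness of $G_\bullet$. Then I identify $H_0$: $H_0(F^U_\bullet)$ is the image of $F_0^U \to I$, which is the $S$-submodule $I(U)\cdot S$ generated by the $f_i$ with $\supp(f_i) \subseteq U$; pulling the factor $\kk[x_j : j \in V]$ back yields $H_0(G_\bullet) = I(U)$. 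Minimality of $F_\bullet$ forces minimality of $G_\bullet$, so $\beta^R_{i,\mathbf{a}}(I(U)) = \beta^S_{i,\mathbf{a}}(I)$ whenever $\supp(\mathbf{a}) \subseteq U$.

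Summing in each total degree gives $\beta^R_{i,j}(I(U)) \le \beta^S_{i,j}(I)$. The assumption that $I$ satisfies $N_k$ means $\beta^S_{i,j}(I) = 0$ for $i < k$ and $j \neq i+d$, so the same vanishing holds for $\beta^R_{i,j}(I(U))$, and $I(U)$ satisfies $N_k$. I expect the main obstacle to be the structural verification in the second paragraph: namely, that $F^U_\bullet$ really is a subcomplex whose differentials descend to $R$-valued coefficients, and that the resulting $G_\bullet$ is a minimal free resolution of $I(U)$ over $R$. Once this descent is in place, the $N_k$ conclusion is purely formal.
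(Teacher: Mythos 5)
Your proposal is correct. The paper disposes of this lemma in one line by citing \cite[Corollary 2.5]{OHH}, so what you have written is in effect a self-contained proof of the cited result rather than a different route to the lemma; it is the standard ``restriction lemma'' argument for monomial ideals. The two points you flag as the main obstacles are exactly the ones that need checking, and both go through: since the resolution is $\Z^n$-graded and each fine graded piece of $S$ is at most one-dimensional, the component of the differential from $S(-\mathbf{a})$ to $S(-\mathbf{a}')$ is a scalar times $x^{\mathbf{a}-\mathbf{a}'}$, which is nonzero only when $\mathbf{a}'\le\mathbf{a}$ componentwise, so $F^U_\bullet$ is a subcomplex with coefficients in $R$; and for any multidegree $\mathbf{b}$ with $\supp(\mathbf{b})\subseteq U$ one has $(F_i)_{\mathbf{b}}=(F_i^U)_{\mathbf{b}}=(G_i)_{\mathbf{b}}$, which gives exactness of $G_\bullet$ in positive homological degrees and also the identification $H_0(G_\bullet)=I(U)$ (the kernel of $F_0^U\to I$ in such degrees equals the image of $F_1^U$, and degrees with support meeting $V$ are handled by freeness of $\kk[x_j:j\in V]$ over $\kk$, i.e.\ $H_i(F^U_\bullet)\cong H_i(G_\bullet)\otimes_\kk\kk[x_j:j\in V]$). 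Minimality is inherited because the nonzero component maps have $\mathbf{a}\neq\mathbf{a}'$, hence lie in the irrelevant ideal of $R$. This yields $\beta^R_{i,\mathbf{a}}(I(U))=\beta^S_{i,\mathbf{a}}(I)$ for $\supp(\mathbf{a})\subseteq U$ and zero otherwise, so $\beta^R_{i,j}(I(U))\le\beta^S_{i,j}(I)$, and the $N_k$ vanishing passes to $I(U)$. What your approach buys over the paper's is transparency: one sees that the multigraded Betti numbers of $I(U)$ are literally a sub-collection of those of $I$, which is slightly stronger than the coarse inequality needed here.
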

\begin{proof} The conclusion follows from \cite[Corollary 2.5]{OHH}.
\end{proof}

Now suppose that all generators of $I$ have degree $d$. We define the graph $G_I$ whose vertex set is $G(I)$ and $\{u, v\}$ is an edge of $G(I)$ if and only if $\deg (\lcm(u, v)) = d + 1$. For all $u, v \in G(I)$, let $G_I(u,v)$ be the induced subgraph of $G_I$ with vertex set 
$$V(G_I(u,v)) = \{w  \in G(I) \mid w \text{ divides } \lcm(u, v) \}.$$

The following criterion for a monomial ideal to satisfy property $N_2$ can be deduced from the work of \cite{GPW}. See \cite[Proposition 1.1]{BHZ} or \cite[Proposition 2.2]{DE}.

\begin{lem}\label{lem_N_2_criterion} Let $I$ be a monomial ideal generated in degree $d$. Then $I$ satisfies $N_2$ if and only if $G_I(u,v)$ is connected for all $u,v \in G(I)$.    
\end{lem}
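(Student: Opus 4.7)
The plan is to deduce the criterion from the $\lcm$-lattice description of the multigraded first syzygies of a monomial ideal due to Gasharov--Peeva--Welker. Writing $L_I$ for the $\lcm$-lattice of $I$, for every monomial $\sigma \in L_I$ one has
$$\beta_{1,\sigma}(I) = \dim_\kk \tilde{H}_0(\Delta(\hat 0, \sigma)),$$
where $\Delta(\hat 0, \sigma)$ denotes the order complex of the open interval below $\sigma$. Since $I$ is generated in a single degree $d$, property $N_2$ is equivalent to the vanishing $\beta_{1,\sigma}(I) = 0$ for all $\sigma \in L_I$ with $\deg \sigma \geq d+2$, equivalently the connectedness of each such $\Delta(\hat 0, \sigma)$.

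For the ``if'' direction, I would fix $\sigma \in L_I$ with $\deg \sigma \geq d+2$ together with two generators $u, v \mid \sigma$. The hypothesis that $G_I(u,v)$ is connected yields a path $u = w_0, w_1, \ldots, w_k = v$ in $G(I)$ with each $w_i \mid \lcm(u,v) \mid \sigma$ and $\deg \lcm(w_{i-1}, w_i) = d+1 < \deg \sigma$. Each $\lcm(w_{i-1}, w_i)$ therefore lies strictly between $\hat 0$ and $\sigma$ in $L_I$, and produces a chain from $u$ to $v$ in $\Delta(\hat 0, \sigma)$, giving the desired connectedness. The case where $\sigma$ is not of the form $\lcm(u,v)$ for some pair reduces to the previous one because any $\sigma \in L_I$ is a join of minimal generators dividing it.

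For the ``only if'' direction, given $u, v \in G(I)$ I set $\sigma = \lcm(u, v)$. If $\deg \sigma = d+1$ then $\{u,v\}$ is already an edge of $G_I(u,v)$. Otherwise $\deg \sigma \geq d+2$ and connectedness of $\Delta(\hat 0, \sigma)$ yields a chain of generators and intermediate $\lcm$'s, all strictly dividing $\sigma$, joining $u$ and $v$. I would then upgrade this to a path in $G_I(u,v)$ by induction on $\deg \sigma$: any consecutive pair $w, w'$ in the chain satisfies $\deg \lcm(w,w') \leq \deg \sigma - 1$, and if this degree exceeds $d+1$ the induction hypothesis applied to $\lcm(w,w')$ provides a $G_I$-path between $w$ and $w'$ inside $G_I(w,w')$.

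The main obstacle is making this refinement stay inside $G_I(u,v)$, but this is automatic because every vertex introduced divides $\lcm(w,w') \mid \sigma = \lcm(u,v)$, so $G_I(w,w') \subseteq G_I(u,v)$; the induction terminates once each edge has $\lcm$ of degree exactly $d+1$. The only nontrivial ingredient is the initial translation between $N_2$ and the combinatorial vanishing $\tilde{H}_0(\Delta(\hat 0, \sigma)) = 0$, which is precisely the content of the results cited in the statement of the lemma.
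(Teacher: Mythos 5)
The paper gives no proof of this lemma, deferring instead to \cite{GPW} via \cite[Proposition 1.1]{BHZ} or \cite[Proposition 2.2]{DE}; your proposal carries out precisely that deduction from the lcm-lattice formula $\beta_{1,\sigma}(I)=\dim_\kk \tilde{H}_0(\Delta(\hat 0,\sigma))$, and the argument is correct. The one point worth tightening is in the ``only if'' direction: the vertices of a path in the order complex $\Delta(\hat 0,\sigma)$ need not be lcm's of pairs of generators, so you should first choose an atom below each vertex of the path and note that two consecutive atoms both lie below the larger of the two comparable lattice elements, whence their lcm strictly divides $\sigma$; with that in place your induction on $\deg\sigma$ goes through as written.
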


Let $I$ be a monomial ideal and $f$ a monomial of $S$. We denote by $I_f$ and $\bar I_f$ the following monomial ideals 
$$I_f = ( g \in I \mid f \text{ divides } g) \text{ and } \bar I_f = (g/f \mid g \in I_f).$$

\begin{lem}\label{lem_local_N_2} Assume that $I$ satisfies $N_2$ and $f$ is a monomial of $S$. Then $I_f$ and $\bar I_f$ satisfy $N_2$.    
\end{lem}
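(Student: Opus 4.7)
The plan is to translate the conclusion into the graph criterion of Lemma~\ref{lem_N_2_criterion} and verify it via a rerouting argument.

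First, observe that $I_f = f\cdot\bar{I}_f$, and the assignment $g \mapsto g/f$ is a bijection between the minimal generating sets of $I_f$ and $\bar{I}_f$. Under this bijection, the edge condition $\deg\lcm(u,v)=d+1$ on $G_{I_f}$ corresponds to $\deg\lcm(u/f,v/f)=(d-\deg f)+1$ on $G_{\bar{I}_f}$, and the vertex condition ``$w \mid \lcm(u,v)$'' that carves out each induced subgraph transfers directly. Thus $I_f$ satisfies $N_2$ if and only if $\bar{I}_f$ does, and we may focus on $I_f$, viewed as generated in the single degree $d$ by those $f_i\in G(I)$ divisible by $f$.

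Second, the identity $(I_f)_g = I_{fg}$, immediate from the definitions, allows iteration over the variables in $\supp(f)$; so it suffices to prove the statement when $f = x_i$ is a single variable.

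For $f = x_i$, the task is then: for every $u, v \in G(I)$ both divisible by $x_i$, the subgraph of $G_I(u,v)$ induced on the $x_i$-divisible vertices is connected. Starting from any path $u = w_0, w_1, \ldots, w_m = v$ in the connected graph $G_I(u,v)$, the key observation is that whenever $w_l$ is divisible by $x_i$ and $w_{l+1}$ is not, the unique element of $\supp(w_l)\setminus\supp(w_{l+1})$ must be $x_i$; so the ``bad'' vertices (those with $x_i\nmid w_l$) appear in maximal sub-segments bracketed on each side by good vertices. To eliminate such a bad sub-segment with bracketing good vertices $w_a, w_b$, I would apply the $N_2$ hypothesis of $I$ to $(w_a, w_b)$: the subgraph $G_I(w_a,w_b)\subseteq G_I(u,v)$ is connected and supplies an alternative sub-path that one can argue contains strictly fewer bad vertices than the original.

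The main obstacle will be controlling the rerouting so the procedure terminates, as a naive local replacement might introduce new bad vertices elsewhere. I would handle this by inducting on the minimum, over all $G_I(u,v)$-paths from $u$ to $v$, of the number of bad vertices, deriving a contradiction when that minimum is positive; this then confirms the graph criterion of Lemma~\ref{lem_N_2_criterion} for $I_{x_i}$ and completes the proof.
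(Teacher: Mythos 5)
Your opening reductions are mostly sound: $I_f=f\cdot\bar I_f$, so the two halves of the statement are equivalent, and $(I_f)_g=I_{fg}$ does let you peel off one variable at a time. One caveat already arises here: the minimal generators of $I_f$ are the minimal elements of $\{\lcm(g,f)\mid g\in G(I)\}$, which need not coincide with $\{g\in G(I)\mid f\text{ divides }g\}$ and need not all have degree $d$ (try $I=(ab,bc,cd)$ and $f=a$, where $I_a=(ab,acd)$), so the identification in your first paragraph is not automatic and would itself need an argument.

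The decisive problem, however, is the core step that you flag as ``the main obstacle'' and then leave open: you never exhibit the rerouted path, and no rerouting argument can exist at this level of generality, because the claim you are reducing to --- that the induced subgraph of $G_I(u,v)$ on the $x_i$-divisible vertices is connected --- fails. Take $I=(x_1x_2x_3,\,x_2x_3x_4,\,x_3x_4x_5,\,x_1x_4x_5)\subseteq\kk[x_1,\dots,x_5]$. Its adjacency graph $G_I$ is the path $x_1x_2x_3-x_2x_3x_4-x_3x_4x_5-x_1x_4x_5$, every $G_I(u,v)$ is connected, and the three linear syzygies generate the entire first syzygy module, so $I$ satisfies $N_2$. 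Yet $I_{x_1}=(x_1x_2x_3,\,x_1x_4x_5)$ has its unique first syzygy in degree $5$: the only path joining the two $x_1$-divisible generators passes through the two generators not divisible by $x_1$, and there is nothing to reroute to, so your induction on the number of bad vertices has no reason to terminate and in fact cannot. For comparison, the paper's own proof of this step is a bare appeal to Lemma~\ref{lem_N_2_criterion} with no supporting argument, so your instinct about where the difficulty lies is correct; but your proposal does not close the gap, and the example shows that any correct treatment must exploit additional structure on $f$ and on the pairs $(u,v)$ actually needed in the applications, rather than a generic path-rerouting inside $G_I(u,v)$.
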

\begin{proof} We may assume that $f$ divides a minimal generator of $I$ and that $f$ is not a minimal generator of $I$ itself. By Lemma \ref{lem_N_2_criterion}, we deduce that $I_f$ satisfies $N_2$. Now, for any minimal generators $u,v$ of $\bar I_f$, we have $G_{\bar I_f} (u,v)$ is isormorphic to $G_{I_f}(fu,fv)$. The conclusion follows from Lemma \ref{lem_N_2_criterion}.
\end{proof}

Let $I \subseteq S$ be a squarefree monomial ideal and $d \in \NN$ a positive integer. We denote by $I_{[d]}$ the squarefree monomial ideal generated by all squarefree monomials of degree $d$ in $I$.

We have the following key lemma to prove our main result.
\begin{lem}\label{lem_gcd} Let $I$ be a squarefree monomial ideal generated in degree $d$. Let $f$ be a squarefree monomial of $S$ such that $2 \le \deg (f) \le d$, $I \not \subseteq (f)$, and $(I + (f))_{[d]}$ satisfies $N_2$. Note that $f$ might be a minimal generator of $I$. Then there exists a minimal generator $f_1$ of $I$ such that $\deg (\gcd(f_1,f)) = \deg (f) - 1$ and $( I + \gcd(f_1,f))_{[d]}$ satisfies $N_2$.    
\end{lem}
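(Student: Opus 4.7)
\emph{Plan.} The plan is in two parts: first, use the $N_2$ property of $J := (I+(f))_{[d]}$ together with the graph criterion of Lemma \ref{lem_N_2_criterion} to locate a candidate $f_1$; second, verify that $K := (I+\gcd(f_1,f))_{[d]}$ also satisfies $N_2$ via the same criterion. Write $e := \deg f$.

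For the first part, I pick a minimal generator $u\in G(J)$ divisible by $f$: take $u=f$ if $e=d$, otherwise $u=f\cdot x_{i_1}\cdots x_{i_{d-e}}$ for any $d-e$ variables outside $\supp f$ (such a monomial is automatically in $G(J)$ because $J$ is generated in degree $d$). Since $I\not\subseteq(f)$ there exists $v\in G(I)$ with $f\nmid v$, and $v\in G(J)$. Lemma \ref{lem_N_2_criterion} yields a path $u=w_0,\ldots,w_s=v$ in $G_J(u,v)$; let $w_i$ be the first vertex with $f\nmid w_i$. The edge relation $\deg\lcm(w_{i-1},w_i)=d+1$ and squarefreeness give $|\supp w_{i-1}\setminus\supp w_i|=1$. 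Combined with $f\mid w_{i-1}$ but $f\nmid w_i$, the unique variable $x_j$ of $\supp w_{i-1}\setminus\supp w_i$ must lie in $\supp f$, so $\gcd(w_i,f)$ has degree $e-1$. As $w_i$ is a squarefree degree-$d$ element of $J$ not divisible by $f$, it lies in $G(I)$; set $f_1:=w_i$ and $g:=\gcd(f_1,f)=f/x_j$.

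For the second part, I first observe $G(J)\subseteq G(K)$, with $G(K)\setminus G(J)$ consisting precisely of squarefree monomials $gm$ of degree $d$ with $\supp m\cap\supp f=\emptyset$, $x_j\nmid m$, and $gm\notin I$. By Lemma \ref{lem_N_2_criterion} I need $G_K(u,v)$ connected for all $u,v\in G(K)$. When both lie in $G(J)$, the subgraph $G_J(u,v)\subseteq G_K(u,v)$ is already connected. For a new generator $w=gm\in G_K(u,v)$ and any $x_a\in\supp m$, the monomial $w':=f\cdot(m/x_a)\in G(J)$ is a neighbor of $w$ in $G_K$; this edge stays inside $G_K(u,v)$ precisely when $x_j\in\supp\lcm(u,v)$. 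So the subcase $x_j\in\supp\lcm(u,v)$ reduces immediately to the previous case.

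The main obstacle is the remaining subcase $x_j\notin\supp\lcm(u,v)$, where the ``add $x_j$'' move leaves the relevant subgraph. My plan is to restrict: set $U:=\supp\lcm(u,v)$. Since $x_j\notin U$, no multiple of $f$ has support contained in $U$, so $J(U)=I(U)$, which still satisfies $N_2$ by Lemma \ref{lem_restriction}. The subgraph $G_K(u,v)$ coincides with the full graph of $K(U)=(I(U)+(g))_{[d]}$, so it suffices to show $K(U)$ satisfies $N_2$. When $\supp g\not\subseteq U$ this is immediate since then $K(U)=I(U)$. The delicate subcase $\supp g\subseteq U$ I plan to handle by induction on $\deg f$, with the base $e=2$ (where $g$ is a single variable and adjoining one variable to an $N_2$ ideal is analyzed directly) and the inductive step using an auxiliary application of the path argument together with Lemma \ref{lem_local_N_2} to relocalize. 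Carrying out this final reduction cleanly is the technical heart of the proof.
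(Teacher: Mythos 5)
Your first part (existence of $f_1$) is correct and is essentially the paper's Step~1: both arguments take a generator of $I$ not divisible by $f$, complete $f$ to a degree-$d$ generator of $J$, and extract from a path in $G_J$ a first vertex not divisible by $f$, which must share $\deg(f)-1$ variables with $f$. Your initial reductions for the second part are also sound: $G(J)\subseteq G(K)$, the case $u,v\in G(J)$ follows from connectivity of $G_J(u,v)$, and the case $x_j\in\supp\lcm(u,v)$ reduces to it by the neighbor $f\cdot(m/x_a)$.

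However, the remaining case $x_j\notin\supp\lcm(u,v)$ with $\supp g\subseteq U$ is not a loose end to be tidied up later; it is the entire content of the lemma, and your proposed route into it does not work. Once you restrict to $U=\supp\lcm(u,v)$ with $x_j\notin U$, the hypothesis that $(I+(f))_{[d]}$ satisfies $N_2$ degenerates to the statement that $I(U)$ satisfies $N_2$ --- all memory of $f$ is erased, since no multiple of $f$ survives the restriction. But ``$I(U)$ is $N_2$ and $g$ divides a generator'' is not enough to conclude that $(I(U)+(g))_{[d]}$ is $N_2$: the paper's Remark following the lemma gives exactly such a failure when $\deg g<\deg f-1$, and the codegree-one condition only has teeth in combination with paths in $G_J$ that pass through multiples of $f$, i.e.\ through monomials divisible by the excised variable $x_j$. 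The paper's resolution (Steps 2--6) is to stay in the full ring, reduce via $\bar J_{(\cdot)}$ and Lemma \ref{lem_local_N_2} to the configuration $u=yh$, $v=gy$, $f=gx$ with disjoint supports, and then take a \emph{shortest} path $u_1,\ldots,u_r$ in $G_J(u,f)$, argue that all its vertices may be assumed divisible by $x$, and transplant it into $G_K(u,v)$ by the substitution $u_i\mapsto y\,(u_i/x)$. Nothing in your sketch supplies this mechanism for importing connectivity from outside $U$ back into $G_K(u,v)$, and your stated induction on $\deg f$ cannot start from the restricted data alone. (Your base case $e=2$ does go through by direct case analysis, but only because there one can still compare $G_K(u,v)$ with $G_J(u,v)$ inside the full ring, not after restriction to $U$.)
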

\begin{proof} For simplicity of notation, we denote by $J = (I + (f))_{[d]}$. For ease of reading, we divide the proof into several steps.

\medskip

\noindent{\textbf{Step 1.}} Existence of $f_1$. Since $I \not \subseteq f$, there exists a minimal generator $h$ of $I$ such that $f$ does not divide $h$. Let $g = \gcd (f,h)$ and $f = g f_1$, $h = gh_1$. If $\deg (g) = \deg (f) - 1$ then we are done. Thus, we may assume that $\deg (f) - \deg(g) > 1$. Let $h_2$ be any divisor of $h_1$ such that $\deg (f) + \deg ( h_2) = \deg (h) = d$. Note that if $\deg (f) = d$ then $h_2 = 1$. Since $\gcd(f,h_1)  = 1$, $fh_2$ is a squarefree monomial and $\deg (fh_2) = d$. By assumption $J$ satisfies $N_2$, hence $G_J(fh_2,h)$ is connected. Since $h_2 | h$, $G_{J}(fh_2,h)$ is isomorphic to $G_{\bar J}(f,h/h_2)$ where $\bar J = \bar J_{h_2}$. By Lemma \ref{lem_N_2_criterion} and definition, there exists $u \in G_{\bar J}(f,h/h_2)$ such that $u \in G(\bar J)$ and $\deg (\gcd (u,f)) = \deg (f) - 1$. Let $f_1 = u h_2$, then $f_1 \in I$ and $\deg (\gcd(f,f_1)) = \deg (f) - 1$.

Now, we let $g = \gcd(f,f_1)$ and $K = (I + (g))_{[d]}$. We prove by induction on $d$ and $n$ that $K$ satisfies $N_2$. Let $u,v$ be any two minimal generators of $K$. It suffices to show that there exists a path in $G_K(u,v)$ that connects $u$ and $v$. By Lemma \ref{lem_N_2_criterion}, we may assume that $u \in I$, $u \notin (g)$, $v \in (g)$ and $v \notin I$. 

\medskip

\noindent{\textbf{Step 2.}} Reduction to the case $\gcd(u,g) = 1$. Let $v = gv_1$ and $g_1 = \gcd(u,g)$. If $g_1 \neq 1$, we may consider $\bar J_{g_1}$ and $\bar K_{g_1}$ and by induction on $d$, we deduce that $G_{\bar K_{g_1}} (\bar u, \bar v)$ is connected. Hence, we may assume that $g_1 = 1$. Also, by Step 1, we may write $f = gx$ where $x$ is a variable of $S$.

\medskip

\noindent{\textbf{Step 3.}} Reduction to the case $v_1 | u$. Let $u_1$ be any divisor of $u$ such that $\deg (u_1) = \deg (v_1)$. We have $gu_1  | \lcm (u,v)$. Hence, $gu_1$ is a vertex in $G_K(u,v)$. Clearly, we have a path in $G_K(u,v)$ that connects $gu_1$ and $gv_1$. We may replace $gv_1$ by $g u_1$ if necessary, so we may assume that $v_1 | u$. In other words, we may assume that $u = v_1 h$.

\medskip

\noindent{\textbf{Step 4.}} Reduction to the case where $x$ does not divide $h$. Assume that $x | h$. Let $v_2$ be any divisor of $v_1$ such that $\deg (v_2) = \deg (v_1) -1$. Then, the monomial $gx v_2 \in J$. By assumption, $G_J(u,gxv_2)$ is connected. Note that since $\lcm(u,gxv_2) | \lcm (u,v)$, all vertices of $G_J(u,gxv_2)$ are also vertices of $G_K(u,v)$. Hence, $u$ is connected to $gxv_2$, and $gx v_2$ is connected to $gv_1 = v$ in $G_K(u,v)$. Hence, $u$ is connected to $v$ in $G_K(u,v)$. Thus, we may assume that $x$ does not divide $h$.

\medskip

\noindent{\textbf{Step 5.}} Reduction to the case where $v_1$ is a variable. In recap, now we have $u = v_1 h$, $v = gv_1$ and $f = gx$ where $\supp (v_1), \supp (g), \supp (h)$ and $\{x\}$ are disjoint. Assume that $\deg (v_1) > 1$. We let $v_2$ be any divisor of $v_1$ such that degree $\deg (v_2) = \deg (v_1) - 1$. By Lemma \ref{lem_local_N_2}, $\bar J_{v_2}$ satisfies $N_2$ and $\bar K_{v_2} = (\bar J_{v_2} + (g))_{[d - \deg (v_2)]}$. By induction on $d$, we deduce that $\bar K_{v_2}$ satisfies $N_2$. Hence, $u/v_2$ and $v/v_2$ are connected in $G_{\bar K_{v_2}}(u/v_2,v/v_2)$. Thus, $u$ and $v$ are connected in $G_{K}(u,v)$.

\medskip

\noindent{\textbf{Step 6.}} Conclusion step. By Step 5, we may assume that $\deg (v_1) = 1$. For simplicity, we call $v_1 = y$. Then we have $u = yh$, $v = gy$ and $f = gx$. Also, $\lcm(u,v) = ygh$ and $\lcm(u,f) = xygh$. Let $u_1, \ldots,u_r$ be a shortest path in $G_J(u,f)$ that connects $u$ and $f$. First, assume that there exists $u_i$ such that $x \not | u_i$. Then, we have $u_i | \lcm (u,v)$. We may replace $u$ with $u_i$ and have a shorter path from $u$ to $f$. Note that a path from $u_i$ to $v$ and a path from $u$ to $u_i$ will give us a path from $u$ to $v$ in $G_K(u,v)$. Hence, we may assume that $x | u_i$ for all $i = 1, \ldots, r$. In particular, we must have $u_1 = xh$ as $\gcd(u,u_1) = \deg (u) - 1$. Let $w_i = u_i/x$. Then $yw_1,\ldots,y w_r$ is a path in $G_K(u,v)$ that connects $u$ and $v$. That concludes the proof of the lemma.
\end{proof}

\begin{rem} In general, if $(I + (f))_{[d]}$ satisfies $N_2$ and $g$ divides $f$ then $(I + (g))_{[d]}$ might not satisfy property $N_2$. The condition that $\deg g = \deg f - 1$ is crucial in Lemma \ref{lem_gcd}. For example, Let $S = \kk[x_1,\ldots,x_8]$ and $I = (x_3x_4x_7x_8,x_3x_4x_5x_7,x_3x_5x_6x_7,x_1x_5x_6x_7,x_1x_2x_5x_6)$. Then $I$ has a linear free resolution. Let $f = x_1x_2x_5x_6$ is a minimal generator of $I$ and $g = x_1x_2$, then $ (I + (g))_{[4]}$ does not satisfy property $N_2$.    
\end{rem}

For a finitely-generated graded $S$-module $M$, the (Castelnuovo-Mumford)
regularity of $M$ is defined to be 
$$\reg (M) = \max \{ j - i \mid \Tor_i^S(M,\kk)_j \neq 0 \}.$$
We have the following simple result.

\begin{lem}\label{intersection} Let $J, K$ be non-zero homogeneous ideals of $S$. Then 
\begin{enumerate}
    \item $\reg (J \cap K) \le \max \{ \reg (J), \reg (K), \reg (J + K) + 1\},$
    \item $\reg (J + K) \le \max \{\reg (J), \reg (K), \reg (J \cap K) - 1\}.$
\end{enumerate} 
\end{lem}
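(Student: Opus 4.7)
The plan is to deduce both inequalities from the standard Mayer--Vietoris short exact sequence
\[
0 \longrightarrow J \cap K \longrightarrow J \oplus K \longrightarrow J + K \longrightarrow 0,
\]
where the left map sends $x \mapsto (x, -x)$ and the right map sends $(a,b) \mapsto a + b$. Since regularity is additive on direct sums, $\reg(J \oplus K) = \max\{\reg(J), \reg(K)\}$, so the problem reduces to the well-known regularity bounds for the outer terms of a short exact sequence.

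First I would recall (and apply) the following general fact: for any short exact sequence $0 \to A \to B \to C \to 0$ of finitely generated graded $S$-modules, the long exact sequence of $\Tor^S_\bullet(-, \kk)$ yields
\[
\reg(A) \le \max\{\reg(B),\, \reg(C)+1\}, \qquad \reg(C) \le \max\{\reg(A)-1,\, \reg(B)\}.
\]
These follow by chasing degrees in $\Tor^S_i(A,\kk)_j \hookrightarrow \Tor^S_i(B,\kk)_j \to \cdots \to \Tor^S_{i-1}(C,\kk)_j$ etc., so that any nonvanishing $\Tor$-entry in $A$ in degree above the claimed bound forces one in $B$ or in $C$ shifted by one, a contradiction.

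Applying the first of these two inequalities to the Mayer--Vietoris sequence with $A = J \cap K$, $B = J \oplus K$, $C = J + K$ gives
\[
\reg(J \cap K) \le \max\{\reg(J \oplus K),\, \reg(J+K)+1\} = \max\{\reg(J), \reg(K), \reg(J+K)+1\},
\]
which is part (i). Applying the second with the same identifications gives
\[
\reg(J + K) \le \max\{\reg(J \cap K) - 1,\, \reg(J \oplus K)\} = \max\{\reg(J), \reg(K), \reg(J \cap K) - 1\},
\]
which is part (ii).

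There is essentially no obstacle here; the only mild point of care is verifying that the Mayer--Vietoris sequence is indeed exact (trivial) and that the $\Tor$-based regularity estimates for short exact sequences are applied with the correct shifts. Since both halves of the lemma come from the same short exact sequence, the two bounds are proved in parallel with no additional input.
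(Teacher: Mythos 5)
Your proposal is correct and uses exactly the short exact sequence $0 \to J \cap K \to J \oplus K \to J + K \to 0$ that the paper's proof invokes, together with the same standard regularity estimates along short exact sequences. You have simply spelled out the details that the paper leaves as ``a standard result,'' and the shifts are applied correctly.
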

\begin{proof}
    We have the short exact sequence 
    $$0 \to J \cap K \to J \oplus K \to (J + K) \to 0.$$
    The conclusion follows from a standard result on regularity along short exact sequences.
\end{proof}

\begin{lem}\label{lem_var_reduction} Let $I$ be a monomial ideal and $g = x_1 \ldots x_k$ be a squarefree monomial such that $g \notin I$. Then 
$$\reg (I,g) \le \max \{\reg (I,x_{i_1}, \ldots, x_{i_j}) + j - 1\}$$ 
where $i_1, \ldots, i_j$ are $j$ distinct indices of $\{1, \ldots, k\}$.    
\end{lem}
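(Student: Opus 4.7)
The plan is to proceed by induction on $k$. The base case $k=1$ is trivial: the inequality reads $\reg(I,x_1)\le \reg(I,x_1)$, which is an equality.

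For the inductive step, I would split $g = (x_1\cdots x_{k-1})\cdot x_k$ and apply Lemma~\ref{intersection}(i) to the ideals
$$J = (I, x_1\cdots x_{k-1}) \quad\text{and}\quad K = (I, x_k).$$
A short monomial computation shows that $J\cap K = (I,g)$: a monomial lies in $J\cap K$ iff it is in $I$, or is divisible by both $x_1\cdots x_{k-1}$ and $x_k$, i.e.\ by $g$; the reverse inclusion is clear since $g\in J$ and $g\in K$. Meanwhile $J+K = (I, x_1\cdots x_{k-1}, x_k)$. Lemma~\ref{intersection}(i) then yields
$$\reg(I,g) \leq \max\bigl\{\reg(J),\ \reg(K),\ \reg(J+K)+1\bigr\}.$$

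Next I would bound each of the three terms by quantities of the required form. For $\reg(K)=\reg(I,x_k)$ this is already the right hand side at $j=1$ with index $k$. For $\reg(J)$, note that $x_1\cdots x_{k-1}\notin I$, since otherwise $g = x_k(x_1\cdots x_{k-1})\in I$, contradicting the hypothesis; so the inductive hypothesis applies and bounds $\reg(J)$ by a maximum of terms $\reg(I,x_{i_1},\ldots,x_{i_j}) + j-1$ with indices in $\{1,\ldots,k-1\}\subset\{1,\ldots,k\}$. For $\reg(J+K)+1$, rewrite $J+K = (I',\,x_1\cdots x_{k-1})$ where $I' = (I,x_k)$. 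The monomial $x_1\cdots x_{k-1}$ does not lie in $I'$: since $x_k$ does not divide it, its membership would force $x_1\cdots x_{k-1}\in I$, again contradicting $g\notin I$. Applying the inductive hypothesis to $I'$ bounds $\reg(J+K)$ by a maximum of $\reg(I',x_{i_1},\ldots,x_{i_j})+j-1 = \reg(I, x_k, x_{i_1},\ldots,x_{i_j}) + j-1$ with indices in $\{1,\ldots,k-1\}$. Adding $1$ and absorbing $x_k$ into the index set (so $|S|$ grows by one), the bound becomes $\reg(I,x_{i_1},\ldots,x_{i_{j+1}}) + (j+1)-1$ for indices in $\{1,\ldots,k\}$ containing $k$, exactly as required.

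Collecting the three bounds, every term on the right is of the form $\reg(I,x_{i_1},\ldots,x_{i_j})+j-1$ for some nonempty subset of $\{1,\ldots,k\}$, completing the induction. The argument is essentially bookkeeping; the only substantive points are the Mayer--Vietoris style inequality from Lemma~\ref{intersection} and the two non-membership checks, both of which reduce immediately to the hypothesis $g\notin I$. I do not expect any genuine obstacle beyond setting up the indexing cleanly.
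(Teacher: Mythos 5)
Your proposal is correct and follows essentially the same route as the paper: induction on $k$ via the decomposition $(I,g) = (I,x_1\cdots x_{k-1}) \cap (I,x_k)$ and Lemma~\ref{intersection}. The paper states this more tersely ("By induction, the conclusion follows"); your extra verifications of the intersection identity and of the non-membership hypotheses needed to invoke the inductive hypothesis are accurate and just make explicit what the paper leaves implicit.
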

\begin{proof}
    We prove by induction on $k$. The case $k = 1$ is obvious. Now assume that $k \ge 2$. Write $(I,g) = (I,g_1) \cap (I,x_k)$ where $g_1 = x_1 \cdots x_{k-1}$. By Lemma \ref{intersection}, we have
    $$\reg (I,g) \le \max (\reg (I,g_1), \reg (I,x_k), \reg (I,g_1,x_k) + 1).$$
    By induction, the conclusion follows.
\end{proof}

Assume that $d \ge 2$. Let $f(n,d)$ and $g(n,d)$ be defined as follows.
$$f(n,d) = \begin{cases} 0 & \text{ if } n < d \\ 
d & \text{ if } n = d\\
\left \lfloor \frac{(d-1)n}{d+1} \right \rfloor + 1 & \text{ if } n > d,\end{cases}$$
and 
$$g(n,d) = n - \left \lfloor \frac{n}{d+1} \right \rfloor - \left \lfloor \frac{n-1}{d+1} \right \rfloor.$$
We now have some simple properties of $f(n,d)$ and $g(n,d)$.
\begin{lem}\label{f_nd_1} Let $n,d,j$ be positive integers. Assume that $d \ge 2$. We have 
\begin{enumerate}
    \item If $j \le \left \lfloor \frac{d+1}{2} \right \rfloor$ and $n - j > d$ then $f(n-j,d) + j-1 \le f(n,d).$
    \item If $j \le d+1$ and $n > d$ then $f(n-j,d) + j - 2 \le f(n,d)$.
\end{enumerate}
\end{lem}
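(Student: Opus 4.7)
The plan is to verify both inequalities by direct case analysis on the position of $n - j$ relative to $d$, since the definition of $f$ has three distinct branches. The key elementary tool is the floor subadditivity $\lfloor x \rfloor - \lfloor y \rfloor \ge \lfloor x - y \rfloor$ for $x \ge y$, which in the regime $n - j > d$ (where both $f(n, d)$ and $f(n-j, d)$ are given by the third branch of the definition) yields
\[ f(n, d) - f(n-j, d) \ge \left\lfloor \frac{(d-1)j}{d+1} \right\rfloor. \]

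For part (i), the hypothesis $n - j > d$ places us in this regime. It then suffices to show that $\lfloor (d-1)j/(d+1) \rfloor \ge j - 1$. Since $j - 1 \in \Z$, this is equivalent to $(d-1)j \ge (d+1)(j-1)$, which simplifies to $2j \le d + 1$; this is exactly the hypothesis $j \le \lfloor (d+1)/2 \rfloor$.

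For part (ii), I would split into three subcases according to whether $n - j$ is greater than, equal to, or less than $d$. When $n - j > d$, the argument of part (i) runs through with $j - 1$ replaced by $j - 2$: the algebraic reduction becomes $2j \le 2(d+1)$, which holds by $j \le d+1$. When $n - j < d$, we have $f(n-j, d) = 0$, while $n > d$ forces $f(n, d) \ge f(d+1, d) = d$ by the evident non-decreasingness of $f(\cdot, d)$ on $[d, \infty)$; combined with $j - 2 \le d - 1$, this yields the bound. The remaining case $n - j = d$ is the most delicate: here $f(n-j, d) = d$ and one must establish $d + j - 2 \le \lfloor (d-1)(d+j)/(d+1) \rfloor + 1$. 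The identity $(d-1)(d+j)/(d+1) = (d + j - 2) - 2(j-1)/(d+1)$ reduces matters to a careful analysis of the fractional part $2(j-1)/(d+1)$.

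I expect this boundary case $n - j = d$ to be the main technical hurdle, since the inequality there is tight and depends sensitively on the divisibility of $2(j-1)$ by $d+1$; a finer parity argument, or perhaps an auxiliary induction on $j$, is likely needed to close it cleanly. By contrast, the $n - j > d$ and $n - j < d$ subcases are essentially arithmetic once the floor subadditivity inequality is in hand.
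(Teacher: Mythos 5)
Your part (i) and the subcases $n-j>d$ and $n-j<d$ of part (ii) are correct and essentially identical to the paper's argument: the paper likewise reduces (i) to the linear inequality $2j\le d+1$ and reduces (ii) to the regime $n-j>d$, where the same computation gives $j\le d+1$. The problem is the subcase $n-j=d$ of part (ii), which you explicitly leave open in the hope that ``a finer parity argument, or perhaps an auxiliary induction on $j$'' will close it. It cannot be closed. Your own identity $(d-1)(d+j)/(d+1)=(d+j-2)-2(j-1)/(d+1)$ shows that the required inequality $d+j-2\le \lfloor (d-1)(d+j)/(d+1)\rfloor+1$ is equivalent to $\lfloor -2(j-1)/(d+1)\rfloor\ge -1$, that is, to $2(j-1)\le d+1$; this fails for $j$ near the allowed upper bound $d+1$. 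Concretely, take $d=2$, $j=3$, $n=5$: then $f(n-j,d)+j-2=f(2,2)+1=3$, while $f(5,2)=\lfloor 5/3\rfloor+1=2$, so statement (ii) is false as written. (Similarly $d=5$, $j=5$, $n=10$ gives $f(5,5)+3=8>7=f(10,5)$.)

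So the gap in your proposal is real, but it is a gap in the statement rather than merely in your argument: no amount of parity analysis or induction will prove (ii) in the case $n-j=d$ once $2(j-1)>d+1$. For comparison, the paper's own proof of (ii) simply asserts ``by definition, we may assume that $n-j>d$,'' which silently skips exactly the boundary case you isolated; your case analysis has in effect exposed an error in the lemma as stated. A corrected version of (ii) must either exclude $n-j=d$ or add the hypothesis $2(j-1)\le d+1$ in that case, and one should then re-examine Step 3 of the proof of Theorem 1.1, where the lemma is invoked with $n-j-1=d$ occurring for $n\le 2d+1$, to see whether the main argument survives the correction.
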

\begin{proof} For (i): by definition, it suffices to prove that 
$$\frac{(d-1)(n-j)}{d+1} + j - 1 \le \frac{(d-1)n}{d+1}.$$
Equivalently, $2j \le d+1$. 

For (ii): by definition, we may assume that $n -j > d$. Then the conclusion is equivalent to $j-2 \le \frac{(d-1)j}{d+1}$, which is equivalent to $j \le d+1$. The conclusion follows.    
\end{proof}
\begin{lem}\label{f_nd_2} Let $n,d$ be positive integers. Assume that $d \ge 2$. Then $g(n,d) - 1 \le f(n,d) \le g(n,d)$ and $f(n,d) = g(n,d) - 1$ if and only if $n = (d+1)k + s$ for some integers $k,s$ such that $1 \le k$ and $\left \lfloor \frac{d+1}{2} \right \rfloor < s \le d.$     
\end{lem}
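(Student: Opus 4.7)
The plan is to reduce both $f(n,d)$ and $g(n,d)$ to explicit expressions in the base-$(d+1)$ decomposition of $n$. Writing $n = (d+1)k + s$ with $0 \le s \le d$, I would first evaluate $g(n,d)$ by computing the two floors: when $s \ge 1$, both $\lfloor n/(d+1)\rfloor$ and $\lfloor (n-1)/(d+1)\rfloor$ equal $k$, giving $g(n,d) = (d-1)k + s$; when $s = 0$, the second floor drops to $k-1$, giving $g(n,d) = (d-1)k + 1$.

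For $f(n,d)$ in the main range $n > d$ (equivalently $k \ge 1$), the identity $(d-1)n = (d+1)n - 2n$ yields
$$\left\lfloor \frac{(d-1)n}{d+1}\right\rfloor = (d-1)k + s - \left\lceil \frac{2s}{d+1}\right\rceil,$$
so $f(n,d) = (d-1)k + s + 1 - \lceil 2s/(d+1)\rceil$. The ceiling evaluates to $0$ when $s = 0$, to $1$ when $1 \le s \le \lfloor (d+1)/2\rfloor$, and to $2$ when $\lfloor (d+1)/2\rfloor < s \le d$. The parity-dependent boundary $s = (d+1)/2$ occurs only for $d$ odd, and there $2s/(d+1) = 1$ so the ceiling is $1$, confirming that $\lfloor (d+1)/2\rfloor$ is the correct threshold. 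Matching these with the formulas for $g$ shows $f = g$ in the first two cases and $f = g - 1$ in the third. The edge case $n = d$ corresponds to $k = 0,\ s = d$ and is checked directly: both $f(d,d)$ and $g(d,d)$ equal $d$.

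Putting the cases together delivers $g(n,d) - 1 \le f(n,d) \le g(n,d)$ whenever $n \ge d$, and identifies $f(n,d) = g(n,d) - 1$ as occurring precisely when $k \ge 1$ and $\lfloor (d+1)/2\rfloor < s \le d$, which is the stated characterization. I do not expect any serious obstacle; the argument is a careful arithmetic case analysis, and the only subtlety is pinning down the correct value of $\lceil 2s/(d+1)\rceil$ at the boundary $s = (d+1)/2$ when $d$ is odd.
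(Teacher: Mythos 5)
Your proof is correct and follows essentially the same route as the paper: both write $n=(d+1)k+s$ and compare explicit formulas for $f$ and $g$, the only cosmetic difference being that you evaluate $\left\lfloor \frac{(d-1)n}{d+1}\right\rfloor$ via $\left\lceil \frac{2s}{d+1}\right\rceil$ while the paper bounds $\left\lfloor \frac{s(d-1)}{d+1}\right\rfloor$ directly by the inequalities $(s-2)(d+1)<s(d-1)<s(d+1)$. Your explicit restriction to $n\ge d$ is in fact slightly more careful than the paper's ``by definition, we may assume $n\ge d+1$,'' since for $2\le n<d$ one has $f(n,d)=0$ and $g(n,d)=n$, so $g-1\le f$ would fail there; but the lemma is only ever invoked with $n\ge d$, so this does not affect the paper.
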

\begin{proof} By definition, we may assume that $n \ge d+1$. Let $n = (d+1)k + s$ for integers $k$, $s$ such that $0 \le s \le d$. If $s = 0$, then $f(n,d) = g(n,d)$. Thus, we may assume that $1 \le s \le d$. Then $g(n,d) = (d-1)k + s$ and 
$$f(n,d) = (d-1) k + \left \lfloor \frac{s(d-1)}{d+1} \right \rfloor + 1.$$
Since $(s-2) (d+1) < s(d-1) < s(d+1)$, we deduce that $g(n,d) -1 \le f(n,d) \le g(n,d)$. Furthermore, $f(n,d) = g(n,d) - 1$ if and only if $s(d-1) < (s-1)(d+1)$. Equivalently, $d + 1 < 2s$. The conclusion follows.
\end{proof}
\begin{rem} Our bound is slightly better than the bound proposed by Dao and Takagi, as shown in Lemma \ref{f_nd_2}. For example, when $d=5$, the values of $f(n,d)$ and $g(n,d)$ are as below

\begin{center}
    \begin{tabular}{c|c|c|c| c| c | c| c|c | c| c |c| c}
        $n$ & 5 & 6 & 7 & 8 & 9 & 10 & 11 & 12 & 13 & 14 & 15 & $\cdots$\\
        \hline
      $f(n,5)$    & 5 & 5 & 5 & {6} & {7} & \textcolor{blue}{7} & \textcolor{blue}{8} & 9 & {9} & {10} & 11 & $\cdots$\\
      \hline
      $g(n,5)$ &  5 & 5 & 5 & {6} &  {7} & \textcolor{red}{8} & \textcolor{red}{9} & 9 &  {9} &  {10} & 11 & $\cdots$
    \end{tabular}
    
\end{center}
This small distinction is crucial for our induction argument though.
\end{rem}

To prove the main result, we will prove by induction the bound for ideals of the form $I + (f)$ where $f$ is a squarefree monomial of $S$ such that $\deg (f) \le d$ and $ (I  + (f))_{[d]}$ satisfies $N_2$. The initial phase of the induction process has a certain subtlety, and we need to prove it directly in the next two lemmas.

\begin{lem}\label{lem_d+1} Let $I$ be a squarefree monomial ideal generated in degree $d$ and $f$ be a non-zero squarefree monomial in $S = \kk[x_1,\ldots,x_{d+1}]$. Assume that $1 \le \deg (f) \le d$. Then $\reg (I + (f)) \le d$.    
\end{lem}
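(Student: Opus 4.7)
The plan is to induct on $k := \deg(f)$, using the fact that the shape of $I$ is extremely constrained in only $d+1$ variables. Specifically, the only squarefree monomials of degree $d$ in $S$ are the $d+1$ monomials $m_\ell := \prod_{j\ne\ell} x_j$ for $\ell\in [d+1]$, so $I = (m_\ell : \ell\in U)$ for some non-empty $U \subseteq [d+1]$. This rigidity is what will make everything collapse to a complete intersection at the right moment.

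For the base case $k=1$, I would take $f=x_i$. Every generator $m_\ell$ with $\ell\ne i$ is divisible by $x_i$, so
\[
(I,x_i) = \begin{cases}(x_i) & \text{if } i\notin U,\\ (x_i,m_i) & \text{if } i\in U.\end{cases}
\]
In the first case the regularity is $1$; in the second case $x_i$ and $m_i$ are coprime squarefree monomials of degrees $1$ and $d$, hence a regular sequence, and the Koszul resolution gives $\reg(I,x_i) = 1 + d - 1 = d$. Either way, $\reg(I,x_i)\le d$.

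For the inductive step $k\ge 2$, pick a variable $x_{i_1}$ dividing $f$ and set $h:=f/x_{i_1}$, a squarefree monomial of degree $k-1\ge 1$. Because $\gcd(x_{i_1},h)=1$, a direct monomial check gives $(I,f) = (I,x_{i_1}) \cap (I,h)$, and Lemma~\ref{intersection}(i) yields
\[
\reg(I,f) \le \max\bigl(\reg(I,x_{i_1}),\ \reg(I,h),\ \reg(I,x_{i_1},h) + 1\bigr).
\]
The first summand is bounded by the base case and the second by the inductive hypothesis applied to $h$. The heart of the argument is the collapse $(I,x_{i_1},h) = (x_{i_1},h)$: if $i_1\notin U$ then $I\subseteq (x_{i_1})$, and if $i_1\in U$ the only additional generator to worry about is $m_{i_1}$, which is divisible by $h$ because $h$ is a squarefree product of variables different from $x_{i_1}$ while $m_{i_1}$ is the product of \emph{all} such variables. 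Consequently $(I,x_{i_1},h) = (x_{i_1},h)$ is a complete intersection of degrees $1$ and $k-1$, with regularity $k-1$, so the third summand is $\le k\le d$. Combining the three bounds closes the induction.

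The only step requiring a moment's thought is the identity $(I,x_{i_1},h)=(x_{i_1},h)$, which crucially exploits that $h\mid m_{i_1}$ in the ambient ring on just $d+1$ variables; outside this narrow setting the divisibility fails and the collapse does not occur. Everything else is routine: the Mayer-Vietoris bound of Lemma~\ref{intersection} together with the standard computation of the regularity of a length-two complete intersection from its Koszul resolution.
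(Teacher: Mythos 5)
Your proof is correct, but it takes a genuinely different route from the paper's. The paper inducts on $d$: it splits the ideal as $I = x_1 I_1 + I_2$, observes that $I_2$ is principal or zero (the same scarcity of squarefree degree-$d$ monomials in $d+1$ variables that you exploit), computes $x_1(I_1+(f_1)) \cap I_2 = x_1 I_2$, and applies the sum bound of Lemma~\ref{intersection}(ii). You instead induct on $\deg(f)$, peel a variable off $f$ rather than off $I$, use the coprime decomposition $(I,f) = (I,x_{i_1}) \cap (I,f/x_{i_1})$ with the intersection bound of Lemma~\ref{intersection}(i), and close the argument via the collapse $(I,x_{i_1},h) = (x_{i_1},h)$, which rests on the explicit classification $I = (m_\ell : \ell \in U)$ with $m_\ell = \prod_{j\ne\ell}x_j$. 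Both the decomposition identity and the collapse are easy to verify, and the complete-intersection regularity computations are right, so the induction closes. Your argument is arguably more self-contained and elementary (no induction on $d$, no need to compute the intersection of $x_1(I_1+(f_1))$ with $I_2$), and it foreshadows the variable-splitting technique the paper uses later in Lemma~\ref{lem_var_reduction} and Step~1 of the main theorem; the paper's version, on the other hand, sets up the splitting $I = x_1I_1 + I_2$ that is reused almost verbatim in the harder Lemma~\ref{lem_d+2}, where your classification of $I$ is no longer available.
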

\begin{proof} We prove by induction on $d$. The base case $d = 1$ is obvious. Now, assume that $d \ge 2$ and $\deg (f) \ge 2$. We may assume that $x_1 | f$. Write $I = x_1 I_1 + I_2$, where $I_2 = (g \mid g \in I, x_1 \text{ does not divide } g)$. Since $\deg (g) = d$ and there are only $d+1$ variables, so either $I_2 = (x_2\cdots x_{d+1})$ or $I_2 = (0)$. When $I_2 = (0)$, the conclusion follows directly from induction. Now, assume that $I_2 = (x_2 \cdots x_{d+1})$. Then we have 
$$ I + (f) = x_1 ( I_1 + (f_1)) + I_2.$$
Furthermore, $I_2 \subseteq I_1 + (f_1)$, hence $x_1 (I_1 + (f_1)) \cap I_2 = x_1 I_2$. The conclusion follows from Lemma \ref{intersection} and induction.    
\end{proof}
\begin{cor} Any ideal generated by squarefree monomials of degrees $d$ in $d+1$ variables has a linear free resolution.    
\end{cor}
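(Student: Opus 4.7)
The plan is to deduce this immediately from Lemma \ref{lem_d+1}. Given a non-zero squarefree monomial ideal $I \subseteq S = \kk[x_1,\ldots,x_{d+1}]$ whose generators all have degree $d$ (the case $I = 0$ being vacuous), I would pick any minimal generator $f \in G(I)$. Then $\deg(f) = d$ satisfies the hypothesis $1 \le \deg(f) \le d$ of the lemma, and trivially $I + (f) = I$, so the lemma yields $\reg(I) \le d$.

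For the reverse inequality, since $I$ is generated in degree $d$, one automatically has $\reg(I) \ge d$ from the fact that $\Tor_0^S(I,\kk)_d \neq 0$. Combining the two bounds gives $\reg(I) = d$, which is precisely the statement that the minimal free resolution of $I$ is $d$-linear. There is no real obstacle here: the entire content of the corollary is already packaged inside Lemma \ref{lem_d+1}. The only cosmetic choice is whether to phrase the reduction as ``$I + (f) = I$ for some $f \in G(I)$'' as above, or to build $I$ up from the zero ideal by adding one generator at a time and invoking the lemma once at the final step; both routes land on the same inequality.
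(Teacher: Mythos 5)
Your proof is correct and is exactly the paper's argument: the authors also deduce the corollary from Lemma \ref{lem_d+1} by taking $f$ to be a minimal generator of $I$, so that $I+(f)=I$ and $\reg(I)\le d$. The remark that $\reg(I)\ge d$ (hence equality, hence linearity) is left implicit in the paper but is the same standard observation you make.
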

\begin{proof}
    The conclusion follows immediately from Lemma \ref{lem_d+1} by setting $f$ as one of the minimal generators of the ideal itself.
\end{proof}

\begin{lem}\label{lem_d+2} Let $I$ be a squarefree monomial ideal generated in degree $d$ and $f$ be a non-zero squarefree monomial in $S = \kk[x_1,\ldots,x_{d+2}]$. Assume that $1 \le \deg (f) \le d$ and $I$ and $(I + (f))_{[d]}$ satisfy $N_2$. Then $\reg (I + (f)) \le d$.    
\end{lem}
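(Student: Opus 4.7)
The plan is to induct on $\deg f$. For the base case $\deg f = 1$, I would write $f = y$ and decompose $I + (y) = (y) + I^{(y)}$, where $I^{(y)}$ is generated by those elements of $G(I)$ not involving $y$. Since $I^{(y)}$ is generated in degree $d$ in the $(d+1)$-variable subring $\kk[x_i : x_i \ne y]$, the Corollary to Lemma \ref{lem_d+1} gives $\reg I^{(y)} = d$; combining $(y) \cap I^{(y)} = y \cdot I^{(y)}$ with Lemma \ref{intersection}(ii) then yields $\reg(I + (y)) \le d$.

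For the inductive step $\deg f \ge 2$, I would invoke Lemma \ref{lem_gcd} to produce $f_1 \in G(I)$ with $g := \gcd(f, f_1)$ of degree $\deg f - 1$ and $(I + (g))_{[d]}$ satisfying $N_2$, and write $f = gy$ where the variable $y$ does not divide $g$. The inductive hypothesis, applied to $g$, gives $\reg(I + (g)) \le d$. The short exact sequence
$$0 \to I + (f) \to I + (g) \to (I + (g))/(I + (f)) \to 0,$$
together with the standard identification $(I + (g))/(I + (f)) \cong \bigl(S/((I : g) + (y))\bigr)(-\deg g)$, reduces the problem to the key bound
$$\reg\bigl((I : g) + (y)\bigr) \le d - \deg g. \qquad (\star)$$

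To establish $(\star)$, first observe that $y \nmid f_1$ (else $y | g$, contradicting $y = f/g$), so $f_1 \in I^{(y)}$ and $(I : g) + (y) = (y) + (I^{(y)} : g)$. In the $(d+1)$-variable subring $\kk[x_i : x_i \ne y]$, every squarefree monomial of degree $d$ has the form $M_y/x_j$ with $M_y := \prod_{x_i \ne y} x_i$, so $I^{(y)} = (M_y/x_j : j \in T)$ for some index set $T$. Set $U := \{j : x_j \ne y \text{ and } x_j \notin \supp g\}$ and $w := \prod_{j \in U} x_j$. A direct case split on whether $x_j \in \supp g$ or $j \in U$ shows that $(M_y/x_j) : g$ equals $w$ in the first case and $w/x_j$ in the second. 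The key observation is that the index $j_1$ with $f_1 = M_y/x_{j_1}$ must lie in $U$, because $\supp g \subseteq \supp f_1$ forces $x_{j_1} \notin \supp g$; hence $T \cap U \ne \emptyset$, the ``big'' generator $w$ is absorbed, and
$$I^{(y)} : g = (w/x_j : j \in T \cap U)$$
is a squarefree ideal generated in degree $|U| - 1 = d - \deg g$ in the $|U| = d + 1 - \deg g$ variables indexed by $U$. The Corollary to Lemma \ref{lem_d+1} then gives a linear resolution and $\reg(I^{(y)} : g) = d - \deg g$; combined with $(y) \cap (I^{(y)} : g) = y \cdot (I^{(y)} : g)$ and Lemma \ref{intersection}(ii), this yields $(\star)$.

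The hardest part is this colon computation: the ``bad'' scenario $T \cap U = \emptyset$ would force $I^{(y)} : g = (w)$, a principal ideal of degree $d + 1 - \deg g$, overshooting $(\star)$ by exactly one. Ruling out this scenario is precisely where Lemma \ref{lem_gcd} earns its keep—its witness $f_1$ furnishes an index in $T \cap U$ automatically, whereas (as the remark after Lemma \ref{lem_gcd} emphasizes) an arbitrary divisor of $f$ need not even preserve $N_2$, let alone provide such a witness.
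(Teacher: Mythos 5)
Your proof is correct, but it takes a genuinely different route from the paper's. The paper inducts on $d$: it fixes a variable $x_1$ dividing $f$, decomposes $I + (f) = x_1(I_1+(f_1)) + I_2$, and uses connectivity of the graphs $G_I(h,g)$ --- i.e., the $N_2$ hypothesis on $I$ itself --- to show that $x_1(I_1+(f_1)) \cap I_2$ has no minimal generator in degree $d+2$; Lemma \ref{intersection} and Lemma \ref{lem_d+1} then finish. You instead induct on $\deg f$, peeling one variable off $f$ at a time via Lemma \ref{lem_gcd} (which the paper reserves for the proof of the main theorem), and you never use the $N_2$ hypothesis on $I$ directly, only on $(I+(f))_{[d]}$. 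Your scheme mirrors Step 1 of the paper's main induction, but since $f(d+2,d)=f(d+1,d)=d$ there is no slack to absorb the ``$+1$'' coming from Lemma \ref{intersection}(i); your substitute for that slack is the sharper identification $(I+(g))/(I+(f)) \cong \bigl(S/((I:g)+(y))\bigr)(-\deg g)$ (valid because $(I+(f)):g = (I:g)+(y)$ for monomial ideals) together with the explicit computation $I^{(y)}:g = (w/x_j : j \in T\cap U)$, where the witness $f_1$ from Lemma \ref{lem_gcd} is exactly what guarantees $T\cap U \neq \emptyset$, hence that the colon is generated in degree $d-\deg g$ rather than $d+1-\deg g$; the case analysis on $x_j \in \supp(g)$ versus $j \in U$ is correct, and the Corollary to Lemma \ref{lem_d+1} applies since $I^{(y)}:g$ lives in $|U| = d+1-\deg g$ variables. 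Two trivial edge cases should be stated explicitly: before invoking Lemma \ref{lem_gcd} you must dispose of $I \subseteq (f)$ (then $I+(f)=(f)$ is principal of degree at most $d$), and in the base case $I^{(y)}$ may be zero (then $I+(y)=(y)$). With those two sentences added, the argument is complete.
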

\begin{proof} We prove by induction on $d$. The base case $d = 1$ is obvious. When $\deg (f) = 1$, the conclusion follows from induction and Lemma \ref{lem_restriction}. Now, assume that $d \ge 2$ and $\deg (f) \ge 2$. We may assume that $x_1 | f$. Write $f = x_1 f_1$ and $I = x_1 I_1 + I_2$, where $I_2 = (g \mid g \in I, x_1 \text{ does not divide } g)$. We may assume that $I_2$ is non-zero. Let 
$$J_2 = (g \mid g \text{ is a minimal generator of } I_2 \text{ and } g \subseteq I_1 + (f_1)).$$
We will prove that $x_1 (I_1 + (f_1)) \cap I_2 = x_1 J_2.$ Indeed, let $h = x_1 h_1$ be a minimal generator of $x_1 I_1$ and $g$ be a minimal generator of $I_2$. Since $x_1 \notin \supp (g)$, we may assume that $g = x_2\cdots x_{d+1}$. If $\lcm(h,g) = x_1g$, we are done. Thus, we may assume that $\lcm(h,g) = x_1\cdots x_{d+2}$. In other words, every minimal generator of $I$ will be a vertex in $G_I(h,g)$. Let $h = u_0,u_1, \ldots, u_r,g = u_{r+1}$ be a path in $G_I(h,g)$. Let $i$ be the smallest index such that $x_1$ does not divide $u_i$. Then $\lcm(u_{i-1}, u_i) = x_1 u_i$. Since $u_{i-1} \in x_1I_1$ and $u_i \in I_2$, we deduce that $x_1 u_i \in x_1 I_1 \cap I_2$. Hence, $\lcm(h,g)$ is not minimal in $x_1 I_1 \cap I_2$. The intersection $(f_1) \cap I_2$ can be done similarly. 

Now, by Lemma \ref{intersection}, we have 
$$\reg ( I +(f)) \le \max \{ \reg (x_1 (I_1 + (f_1))), \reg (I_2), \reg (x_1 J_2) - 1\}.$$
By induction and Lemma \ref{lem_d+1}, the conclusion follows.
\end{proof}

\begin{rem}
Alternatively, one can prove Lemma \ref{lem_d+2} as follows. It suffices to prove that the square-free part of $(I + (f))_{[d]}$ has linear resolution, since if $\reg (I + (f)) > d$, there has to be a nontrivial homology $\Tor_i(k, S/(I,f))_m$ at square-free monomial degree $m$ with $|m|>i+d$, as $I+(f)$  is a square-free monomial ideal. But the square-free part of  $(I + (f))_{[d]}$ still satisfies $N_2$, so it's Alexander dual $J$ is $(S_2)$. However $J$ has codimension $d$ in a ring of $d+2$ variable, so $S/J$ is Cohen-Macaulay, and thus the square-free part of $(I + (f))_{[d]}$ has linear resolution. 
\end{rem}

We are now ready for the proof of the main theorem.
\begin{proof}[Proof of Theorem \ref{thm1}] We prove by induction on $n$ that $\reg (I + (f)) \le f(n,d)$ for all squarefree monomial ideal $I$ and all squarefree monomial $f$ of $S = \kk[x_1,\ldots,x_n]$ such that $I$ is generated in degree $d$, and $I$ and $(I + (f))_{[d]}$ satisfies $N_2$. By Lemma \ref{lem_d+1} and Lemma \ref{lem_d+2}, we may assume that $n > d+2$. For ease of reading, we divide the proof into several steps.

\medskip

\noindent{\textbf{Step 1.}} $d + 2 < n \le d + \left \lfloor \frac{d+1}{2} \right \rfloor + 1$. We prove by induction on the degree of $f$. Note that, in this range, we have $f(n,d) = f(n-1,d) + 1$. If $\deg(f) = 1$, the conclusion follows from Lemma \ref{lem_restriction}. Now, assume that $\deg(f) \ge 2$. By Lemma \ref{lem_gcd}, there exists $g$ such that $f = xg$ for some variable $x$ of $S$ and $(I + (g))_{[d]}$ satisfies $N_2$. We have 
$$ (I + (f)) = (I + (g)) \cap (I + (x)).$$ 
By Lemma \ref{intersection}, we deduce that 
$$\reg ( I +(f)) \le \max \{ \reg ( I + (g)), \reg ( I + (x)), \reg ( I +(g,x)) + 1\}.$$
By induction on the degree of $f$, $\reg (I + (g)) \le f(n,d)$. By induction on $n$, $\reg (I +(x))$ and $\reg (I + (g,x)) \le f(n-1,d)$. Since $f(n,d) = f(n-1,d)+1$, the conclusion follows.

\medskip
We now assume that $n >  d + \left \lfloor \frac{d+1}{2} \right \rfloor + 1$.

\medskip

\noindent{\textbf{Step 2.}}  $\deg (f) \le \left \lfloor \frac{d+1}{2} \right \rfloor$. By Lemma \ref{lem_var_reduction}, we have 

\begin{align*}
    \reg ( I + (f)) &\le \max \{ \reg (I + (x_{i_1}, \ldots, x_{i_j})) + j-1 \mid \emptyset \neq \{i_1,\ldots,i_{j}\} \subseteq \supp (f)\} \\
    &\le \max \{ f(n-j,d) + j-1\mid j = 1, \ldots, \deg (f)\} \le f(n,d).
\end{align*}
The second inequality follows from induction and Lemma \ref{lem_restriction}, and the third inequality follows from Lemma \ref{f_nd_1}.

\medskip

\noindent{\textbf{Step 3.}} $\deg (f) > \left \lfloor \frac{d+1}{2} \right \rfloor$. By Lemma \ref{lem_gcd}, there exists $f_1 \in I$ such that $\gcd (f_1,f) = g$, $\deg (g) = \deg (f) - 1$ and $(I + (g))_{[d]}$ satisfies $N_2$. Write $f = xg$ and $f_1 = g f_2$. We have $\deg (f_2) \le \left \lfloor \frac{d+1}{2} \right \rfloor$ and 
$$I + (f) = (I + (g)) \cap (I + (f_2,x).$$
By Lemma \ref{intersection}, we deduce that 
$$\reg (I + (f)) \le \max \{ \reg (I + (g)), \reg ( I +(f_2,x)), \reg (I + (g,f_2,x)) + 1\}.$$
By induction on the degree of $f$, we have $\reg (I  + (g)) \le f(n,d)$. Since, $\deg (f_2) \le \left \lfloor \frac{d+1}{2} \right \rfloor$, as in Step 1, we deduce that $\reg (I + (f_2,x)) \le f(n,d)$. Note that $\supp (g)$, $\supp (f_2)$ and $\{x\}$ are distinct and $\deg (g) + \deg (f_2) = d$. Applying Lemma \ref{lem_var_reduction} twice to $g$ and $f_2$, we deduce that 
\begin{align*}
    \reg (I + (f_2,g,x)) + 1 & \le \max \{ \reg ( I + (x,x_{i_1},\ldots,x_{i_s},x_{j_1},\ldots,x_{j_t})) + (s + t - 1)  \\
     & \mid \{i_1,\ldots,i_s\} \subseteq \supp (f_2), \{j_1,\ldots,j_t\} \subseteq \supp (g)\}.
\end{align*}
By induction, Lemma \ref{lem_restriction} and Lemma \ref{f_nd_1}, we deduce that 
$$\reg (I + (g,f_2,x)) + 1 \le \max \{ f(n-j-1,d) + j-1 \mid j = 1, \ldots, \deg(f_2) + \deg (g)\} \le f(n,d).$$
The conclusion follows.
\end{proof}
Dao and Takagi \cite[Example 3.11]{DT} showed that the bound in Theorem \ref{thm1} is achieved when $d = 2k+1$ and $n = t(k+1)$ for $t \ge 2$. We will now show that the bound in \ref{thm1} is achieved for all $n \ge d$ when $d$ is odd.
\begin{lem}\label{lem_sharp_bound} Assume that $d = 2k + 1 \ge 3$. Then for all $n \ge d$, there exists an ideal $I$ generated in degree $d$ such that $I$ satisfies property $N_2$ and $\reg (I) = f(n,d)$.    
\end{lem}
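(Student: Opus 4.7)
The plan is to construct the examples inductively on $n \geq d$. The base case $n = d$ is handled by the principal ideal $I = (x_1 x_2 \cdots x_d)$, which vacuously satisfies $N_2$ (having only one minimal generator) and has $\reg I = d = f(d, d)$.

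For $n > d$, write $n = (k+1)q + r$ with $q \geq 1$ and $0 \leq r \leq k$. A direct computation (as in Lemma \ref{f_nd_2}) gives $f(n, d) = kq + 1$ when $r \in \{0, 1\}$ and $f(n, d) = kq + r$ when $2 \leq r \leq k$. The plan is to handle the residue classes of $r$ separately. For $r = 0$, apply the Dao--Takagi construction of \cite[Example 3.11]{DT}, which supplies an $N_2$ ideal with $\reg = kt + 1$ in $t(k+1)$ variables for each $t \geq 2$. For $r = 1$, note that $f(n, d) = f(n-1, d)$; take the Dao--Takagi example $J \subseteq \kk[x_1, \ldots, x_{n-1}]$ and extend it to an ideal in $\kk[x_1, \ldots, x_n]$ by adjoining the unused variable $x_n$. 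Since the minimal free resolution is preserved under this flat base change, both $\reg$ and $N_2$ are inherited, giving $\reg = kq + 1 = f(n, d)$.

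For $2 \leq r \leq k$, the target regularity $kq + r$ exceeds $f(n-1, d) = kq + r - 1$ by one, so a purely formal extension does not suffice. The plan is to construct the example recursively by modifying the $(n-1)$-variable example with a carefully chosen additional generator $m$ of degree $d$ involving the new variable $x_n$, producing $I_n = I_{n-1} + (m)$. The generator $m$ must be designed so that (i) the resulting graph $G_{I_n}(u,v)$ remains connected for all pairs of generators, preserving $N_2$ (by Lemma \ref{lem_N_2_criterion}), and (ii) the regularity jumps by exactly one, which can be checked via the exact sequence $0 \to I_{n-1} \cap (m) \to I_{n-1} \oplus (m) \to I_n \to 0$ together with Lemma \ref{intersection}. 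A natural candidate is to take $m = x_n \cdot m'$ where $m'$ is a degree-$(d-1)$ monomial supported on the portion of the Dao--Takagi example closely interacting with the newly added variables $x_{(k+1)q+1}, \ldots, x_n$.

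The main obstacle is the last construction: ensuring simultaneously that $N_2$ is preserved and that regularity increases by exactly one at each step. This requires identifying the precise combinatorial structure of the Dao--Takagi examples and verifying the delicate interaction between the added generator and the existing linear syzygy graph. Via Alexander duality and Yanagawa's equivalence \cite[Corollary 3.7]{Y}, this is equivalent to constructing Stanley--Reisner complexes satisfying $(S_2)$ with prescribed projective dimension, and the construction is expected to proceed by a controlled "partitioning" of the variables into $q$ blocks of size $k+1$ and a distinguished block of size $r$ whose interactions with the rest are calibrated to produce the desired jump.
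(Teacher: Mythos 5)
There is a genuine gap: the case $2 \le r \le k$ (in your notation), which is exactly where the content of the lemma lies, is never actually constructed. You correctly observe that for $r\in\{0,1\}$ one can quote \cite[Example 3.11]{DT} or pad with an unused variable, and that for $2\le r\le k$ the target $f(n,d)=kq+r$ exceeds $f(n-1,d)$ by one so a formal extension fails. But your replacement --- adjoining a single new generator $m=x_n m'$ to the $(n-1)$-variable example and calibrating it so that $N_2$ survives and the regularity rises by exactly one --- is left as a plan: no candidate $m'$ is specified, and you yourself flag the verification of connectivity of the graphs $G_{I_n}(u,v)$ and the control of $I_{n-1}\cap(m)$ as the ``main obstacle.'' Since the recursion never pins down the $(n-1)$-variable ideal explicitly, there is nothing against which the required short exact sequence computation could even be carried out. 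Your closing remark about partitioning the variables into $q$ blocks of size $k+1$ plus a block of size $r$ points at the right combinatorial structure, but it is not turned into an ideal.

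For comparison, the paper's construction is direct and uniform in the residue $s$: write $n=(k+1)t+s$ with $t\ge 2$ and $0\le s\le k$, let $J$ be the complete intersection of $t$ squarefree monomials of degree $k+1$ on disjoint blocks of variables (together with one further monomial of degree $s$ on the remaining block when $2\le s\le k$; when $s=1$ one variable is simply unused), so that the Koszul complex gives $\reg J = tk+1$ or $tk+s$, i.e.\ $\reg J=f(n,d)$. Then set $I=J_{[d]}$, the ideal generated by the squarefree degree-$d$ monomials in $J$. Since $\reg J\ge d$, the truncation principle of \cite[Proposition 1.7]{EHU} yields $\reg I=\reg J=f(n,d)$, and property $N_2$ for $I$ is verified via Lemma \ref{lem_N_2_criterion} and \cite[Theorem 2.1]{GPW}. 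This one-shot truncation of a monomial complete intersection is what you would need to substitute for your unresolved recursive step; no induction on $n$ and no case analysis on $r$ beyond the choice of block sizes is required.
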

\begin{proof} We may assume that $n \ge d + 1$. Let $n = (k+1) t + s$ for $0\le s \le k$ and $t \ge 2$. We have 
$$f(n,d) = kt + \left \lfloor \frac{ k s}{k+1} \right \rfloor + 1 = \begin{cases}
    kt + 1 & \text{ if } s = 0, \\
    kt + s & \text{ if } s \ge 1.
\end{cases}$$
If $s = 0$ or $s = 1$, we let $J$ be the complete intersection of $t$ monomials of degree $k+1$. If $2 \le s \le k$, we let $J$ be the complete intersection of $t$ monomials of degree $k+1$ and a monomial of degree $s$. Then, we have $\reg (J) = f(n,d)$. Let $I = J_{[d]}$. By Lemma \ref{lem_N_2_criterion}, \cite[Theorem 2.1]{GPW}, and the truncation principle \cite[Proposition 1.7]{EHU}, we have that $I$ satisfies property $N_2$ and $\reg (I) = \reg (J) = f(n,d)$. The conclusion follows.
\end{proof}

\begin{cor} Let $I$ be a squarefree monomial ideal of height $c$. Assume that $S/I$ satisfies Serre's $S_2$ condition. Then 
$$\cd(S,I) \le f(n,c) \le n - \left \lfloor \frac{n}{c + 1} \right \rfloor - \left \lfloor \frac{n-1}{c + 1} \right \rfloor.$$
\end{cor}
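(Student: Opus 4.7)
The plan is to derive this corollary by combining Theorem \ref{thm1} with the standard dictionary between squarefree monomial ideals and their Alexander duals, together with the size comparison $f(n,c) \le g(n,c)$ already recorded in Lemma \ref{f_nd_2}.

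First, I would translate the hypothesis via Yanagawa's result \cite[Corollary 3.7]{Y}: since $I$ is a squarefree monomial ideal of height $c$ such that $S/I$ satisfies $(S_2)$, the Alexander dual $I^\vee$ is a nonzero squarefree monomial ideal generated in degree $c$ whose minimal free resolution is linear for one step, i.e.\ $I^\vee$ satisfies property $N_2$.

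Second, I would bring in the chain of equalities
$$\cd(S,I) \;=\; \pd(S/I) \;=\; \reg(I^\vee),$$
due to Terai \cite{T} together with the characteristic-free refinement by Singh and Walther \cite{SW}. This reduces the bound on $\cd(S,I)$ to a bound on the Castelnuovo-Mumford regularity of $I^\vee$.

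Third, I would apply Theorem \ref{thm1} to $I^\vee$ with $d = c$. Since $I^\vee$ is a nonzero squarefree monomial ideal generated in degree $c$ with linear first syzygies, the theorem yields
$$\reg(I^\vee) \;\le\; \max\left\{c,\; \left\lfloor \frac{(c-1)n}{c+1}\right\rfloor + 1\right\} \;=\; f(n,c),$$
matching the definition of $f(n,c)$ (the case $n < c$ is vacuous, as $I$ would have to be the zero ideal, and the cases $n = c$ and $n > c$ are immediate from the formula). Chaining these in with Lemma \ref{f_nd_2}, which gives $f(n,c) \le g(n,c) = n - \lfloor n/(c+1)\rfloor - \lfloor (n-1)/(c+1)\rfloor$, produces the desired inequality.

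There is essentially no obstacle to this argument: all the heavy work is already done inside Theorem \ref{thm1}, and the two translations (Yanagawa's combinatorial $(S_2)$ criterion, and the Terai/Singh-Walther identities) are invoked as black boxes. The only minor point to double-check is the edge case $n = c$, where $I = (x_1 \cdots x_n)$, $I^\vee = (x_1,\ldots,x_n)$, and both sides of the claimed inequality equal $c$, so the bound holds trivially.
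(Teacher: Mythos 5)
Your proof is correct and follows exactly the same route as the paper, whose proof simply cites Terai, Singh--Walther, Yanagawa, Lemma \ref{f_nd_2}, and Theorem \ref{thm1} in precisely the order you spell out. (One immaterial slip: in the edge case $n=c$ you have $I$ and $I^\vee$ swapped --- height $n$ forces $I=(x_1,\ldots,x_n)$ and $I^\vee=(x_1\cdots x_n)$ --- but both sides of the inequality still equal $c$, so nothing changes.)
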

\begin{proof}
    The conclusion follows from the result of Terai \cite{T}, Singh and Walther \cite{SW}, Yanagawa \cite{Y}, Lemma \ref{f_nd_2}, and Theorem \ref{thm1}.
\end{proof}

\begin{rem} When $d = 2\ell$ is even, the bound in Theorem \ref{thm1} might not be tight yet. The first case, where we are not able to construct a tight example, is when $d = 4$ and $n = 10$. 
\end{rem}

\section*{Acknowledgments} A part of this paper was completed while the first author visited the Vietnam Institute for Advanced Study in Mathematics (VIASM) in June 2024. He would like to thank the VIASM for hospitality and financial support.

\end{document}